\documentclass[11pt]{article}
\usepackage[a4paper,margin=2.5cm]{geometry}
\usepackage{amsmath,amssymb,amsthm,enumitem}
\usepackage{xcolor}
\usepackage[
    colorlinks=true,
    linkcolor=blue,   
    citecolor=blue,   
    urlcolor=blue!60 
]{hyperref}

\newtheorem*{theorem*}{Theorem}

\newtheorem{lemma}{Lemma}
\newtheorem{corollary}{Corollary}

\DeclareMathOperator{\Cay}{Cay}
\DeclareMathOperator{\SubF}{SubF}

\setlist[enumerate]{%
  labelwidth=\parindent,
  labelindent=\parindent,
  leftmargin=0pt,
  labelsep=*,
  align=left,
  itemindent=\dimexpr\parindent+1.5em\relax,
  itemsep=2pt,
  topsep=2pt,
  partopsep=0pt,
  parsep=0pt
}

\title{Factors in infinite groups}
\author{Mikhail Kabenyuk}
\date{}

\begin{document}
\maketitle
\begin{abstract}
Let $G$ be a group and $A\subseteq G$ a non-empty subset. A right $s$-factor associated with $A$ is a maximal subset
 $U\subseteq G$ such that the product $AU$ is direct. 
The lower and upper $s$-indices $|G:A|^-$ and $|G:A|^+$ are defined as the minimum and the supremum of the cardinalities of such maximal sets $U$.
The subset $A$ is called stable if $|G:A|^- = |G:A|^+$, and $G$ is called stable if every subset of $G$ is stable.

Using a graph-theoretic reformulation in terms of Cayley graphs, we prove that every infinite group is unstable. 
Equivalently, for every infinite group $G$ there exists a subset $A\subseteq G$ for which maximal subsets $U$ with 
direct product $AU$ do not all have the same cardinality. This gives a negative answer to Question 21.58 of the 
Kourovka Notebook.
\end{abstract}

\section{Introduction}
Let $G$ be a group and let $A$ be a fixed non-empty subset of $G$.
A non-empty subset $U\subset G$ is called a \textit{right $s$-factor of $G$ associated with $A$}
if 
\begin{enumerate}
    \item[(SF1)] every element $x\in AU$ can be written uniquely as $x=au$ with $a\in A$ and $u\in U$
and 
    \item[(SF2)] $U$ is maximal (with respect to inclusion) with this property.
\end{enumerate}
Right $s$-factors exist for every non-empty subset of $A\subseteq G$ (by Zorn’s lemma).
Let $\SubF_r(G\!:\!A)$ be the set all right $s$-factor of $G$ associated with $A$.
Also,
\begin{align*}
&|G:A|^-=\inf\{|U|\mid U\in\SubF_r(G\!:\!A)\},\\
&|G:A|^+=\sup\{|U|\mid U\in\SubF_r(G\!:\!A)\}.
\end{align*}
A subset $A$ is called stable if $|G:A|^-=|G:A|^+$.
A group $G$ is called stable if every subset of $G$ is stable.
Note that for any $x\in G$, the sets $A$ and $xA$ have the same collection of right $s$-factors,
and thus $|G:A|^{\pm}=|G:xA|^{\pm}$.
Hence in many arguments we may assume $e\in A$.

\medskip
These notions (under the names ``subfactor'' and ``index stability'') were introduced in \cite{Hooshmand2020} and \cite{Hooshmand2023}. 
The term $s$-factor was introduced in \cite{Kab2026}.
In \cite{Kab2026}, a connection between $s$-factors and maximal independent sets in graphs was established as follows. For a given subset $A$ with $e\in A$, define
\[
\partial A=A^{-1}A\setminus\{e\}=\{a^{-1}b\mid a,b\in A,\ a\neq b\}.
\]
Clearly, the set $\partial A$ is symmetric (that is, together with each element it contains its inverse) 
and does not contain $e$ by construction.
The (left) Cayley graph $\Cay(G,\partial A)$ is defined as follows: its vertices are the elements of $G$, 
and two vertices $g,h$ are adjacent if $gh^{-1}\in \partial A$.
The graph $\Cay(G,\partial A)$ is simple, i.e., it is undirected and has no loops or multiple edges.

If the product $AU$ satisfies condition (SF1), then $U$ is an independent set in $\Cay(G,\partial A)$.
Indeed, if $u\neq v$ and $u\sim v$, then $uv^{-1}\in\partial A$, that is, $uv^{-1}=a^{-1}b$ for some $a,b\in A$,
or equivalently $au=bv$. By (SF1), this implies $a=b$ and $u=v$, a contradiction.

Conversely, if $U$ is an independent set in $\Cay(G,\partial A)$, then $AU$ satisfies (SF1).
Indeed, if $au=bv$ for some $a,b\in A$ and $u,v\in U$, then
\[
uv^{-1}=a^{-1}b.
\]
If $u\neq v$, then (by independence) $uv^{-1}\notin\partial A$, whereas $a^{-1}b\in\partial A$ whenever $a\neq b$.
Hence $u=v$, and then $a=b$.

If, in addition, condition (SF2) holds, then $U$ is a maximal independent set in $\Cay(G,\partial A)$.
Indeed, if $x\in G\setminus U$, then by (SF2) there exist $a,b\in A$ and $u\in U$ such that $ax=bu$.
If $a=b$, then $x=u\in U$, a contradiction; hence $a\neq b$, so
\[
xu^{-1}=a^{-1}b\in\partial A,
\]
and therefore $x\sim u$. Thus $U\cup\{x\}$ is not independent.

Conversely, if $U$ is a maximal independent set in $\Cay(G,\partial A)$, then (SF2) holds.
Indeed, for any $x\in G\setminus U$, maximality implies that $U\cup\{x\}$ is not independent, so there exists
$u\in U$ such that $x\sim u$. Hence
\[
xu^{-1}\in\partial A,
\]
so $xu^{-1}=a^{-1}b$ for some $a,b\in A$, equivalently $ax=bu$.

Therefore, $\SubF_r(G\!:\!A)$ is precisely the set of all maximal independent sets of the graph
$\Gamma=\Cay(G,\partial A)$. Hence $|G:A|^+=\alpha(\Gamma)$, the independence number of $\Gamma$
(we use this term even when it is an infinite cardinal), and $|G:A|^-=i(\Gamma)$, the independent domination number.
The graph $\Gamma$ is \textit{well-covered} if every maximal independent set in $\Gamma$ is maximum,
or equivalently, $\alpha(\Gamma)=i(\Gamma)$.

In this language, we can give another definition of a stable subset.
A subset $A\subset G$ with $e\in A$ is called stable if the Cayley graph $\Cay(G,\partial A)$ is well-covered.

In \cite[Theorem 3.1]{HooshmandArani} (see also \cite[Theorem 16]{Kab2026}), 
it was proved that among finite groups there are only 14 stable groups; 
all other finite groups are unstable.
In this note we prove that every infinite group is unstable.

This last statement answers Question~21.58 from the Kourovka Notebook \cite{Kourovka2026}:

21.58. We say that a product $XY=\{xy\mid x\in X,\ y\in Y\}$ of two subsets $X,Y$ of
a group $G$ is direct if for every $z\in XY$ there are unique $x\in X$, $y\in Y$ such that
$z=xy$. Is there an infinite group $G$ such that every subset $A\subset G$ satisfies the
following property: all maximal subsets $B$ for which the product $AB$ is direct
have the same cardinality?

The stated theorem implies that no such infinite group exists.

\medskip
\noindent\textbf{Notation.} 
We use the following notation from group theory and graph theory.
The symbol $e$ denotes the identity element of a group.
If $A$ and $B$ are subsets of a group, then
\[
A^{-1}=\{a^{-1}\mid a\in A\}\quad\text{and}\quad
AB=\{ab\mid a\in A,\;b\in B\}.
\]
For a subgroup $H\le G$, we write $H^* = H\setminus\{e\}$.
If $v$ is a vertex of a graph, then $N(v)$ denotes the set of all neighbors of $v$, 
and $N[v] = N(v)\cup\{v\}$.
If $u$ and $v$ are vertices of a graph, then we write $u \sim v$ if and only if $u$ and $v$ are adjacent.
The notation $X\subset Y$ does not exclude the case where $X = Y$.

All undefined graph-theoretic terms are used in the sense of \cite{Berge}, and all undefined group-theoretic terms 
are used in the sense of \cite{Robinson}.

\section{Graph-theoretic reformulation}

Along with the graph $\Gamma=\Cay(G,\partial A)$, in our setting it will be useful to consider two more graphs.
Let $F=G\setminus A^{-1}A$. Then $F$ is also symmetric and does not contain $e$, so we may consider
the Cayley graph $\Cay(G,F)$. It is easy to see that this graph is the complement of $\Gamma$,
that is, distinct $g,h\in G$ form an edge in $\Cay(G,F)$ if and only if 
$g,h$ are nonadjacent in $\Gamma$.
We shall denote this by
\[
\Cay(G,F)=\overline{\Gamma}.
\]
Since $\Gamma$ and $\Cay(G,F)$ are complementary graphs on the same vertex set $G$, 
independent sets in $\Gamma$ are exactly cliques in $\Cay(G,F)$, and maximal independent sets in $\Gamma$ 
are exactly maximal cliques in $\Cay(G,F)$. Hence $\alpha(\Gamma)$ is the cardinality of a largest clique in 
$\Cay(G,F)$, while $i(\Gamma)$ is the minimum cardinality of a maximal clique in $\Cay(G,F)$.

We also define another useful graph.
Let $F$ be a symmetric subset of $G$ with $e\notin F$. Define the graph $\Delta=\Delta(F)$ by:
the vertices of $\Delta$ are the elements of $F$, 
and for $u,v\in F$, we have $u\sim v$ in $\Delta$ if and only if $uv^{-1}\in F$.
It is easy to see that if $F=G\setminus A^{-1}A$ for some $A\subset G$, 
then $\Delta(F)$ is the subgraph induced by $F$
in the graph $\Cay(G,F)$.
A \emph{clique} in $\Delta$ is a set of pairwise adjacent vertices.
For a finite graph $\Delta$, define
\[
\omega(\Delta)=\max\{|Q|\ \mid Q \text{ is a clique in }\Delta\},
\quad
\iota(\Delta)=\min\{|Q|\ \mid Q \text{ is a maximal clique in }\Delta\}.
\]

\begin{lemma}\label{lemma:DeltaF_cliques}
Let $A\subset G$, let $F=G\setminus A^{-1}A$, and let $\Gamma=\Cay(G,\partial A)$. 
Assume that $F$ is finite and $\Delta=\Delta(F)$.
Then $\omega(\Delta)=\alpha(\Gamma)-1$, 
and $\iota(\Delta)=i(\Gamma)-1$.

Consequently, the graph $\Gamma$ is well-covered if and only if
all maximal cliques of $\Delta(F)$ have the same size.
\end{lemma}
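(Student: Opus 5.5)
The plan is to exploit vertex-transitivity of $\overline{\Gamma}=\Cay(G,F)$ and reduce cliques containing $e$ to cliques of $\Delta$.

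\emph{Step 1: translation.} Right multiplication $x\mapsto xg$ is an automorphism of $\overline{\Gamma}$, since $(xg)(yg)^{-1}=xy^{-1}$. By the discussion preceding the lemma, $\alpha(\Gamma)$ is the supremum of sizes of cliques of $\overline{\Gamma}$, and $i(\Gamma)$ is the minimum size of a maximal clique of $\overline{\Gamma}$. Every nonempty clique $Q$ can be translated by $q^{-1}$ for some $q\in Q$, giving a clique $Qq^{-1}$ of the same size that contains $e$. Translation also preserves maximality. So it suffices to consider cliques of $\overline{\Gamma}$ containing $e$.

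\emph{Step 2: the bijection.} The neighbours of $e$ in $\overline{\Gamma}$ are the $x\ne e$ with $x=xe^{-1}\in F$, so $N(e)=F$. Hence a set $Q\ni e$ is a clique of $\overline{\Gamma}$ if and only if $Q\setminus\{e\}\subset F$ and $Q\setminus\{e\}$ is pairwise adjacent in $\overline{\Gamma}$. The latter means $uv^{-1}\in F$ for all distinct $u,v$, which is exactly being a clique in $\Delta(F)$. Thus $Q\mapsto Q\setminus\{e\}$ is a size-decreasing-by-one bijection between cliques of $\overline{\Gamma}$ containing $e$ and cliques of $\Delta$, including the empty clique.

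Next I check that maximality corresponds. Suppose $Q\ni e$ is a maximal clique of $\overline{\Gamma}$. Any vertex extending $Q\setminus\{e\}$ in $\Delta$ lies in $F$, is adjacent to $e$, and so would extend $Q$; hence $Q\setminus\{e\}$ is maximal in $\Delta$. Conversely, suppose $Q'$ is maximal in $\Delta$ and $x$ extends $Q'\cup\{e\}$ in $\overline{\Gamma}$. Then $x\sim e$ forces $x\in F$, and $x$ then extends $Q'$ in $\Delta$, which is a contradiction.

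\emph{Step 3: conclusion.} Since $F$ is finite, every clique of $\overline{\Gamma}$ has size at most $|F|+1$. Hence the supremum and minimum are attained, and $\omega(\Delta)=\alpha(\Gamma)-1$ and $\iota(\Delta)=i(\Gamma)-1$. The empty-clique edge case deserves a brief note: when $F=\emptyset$, the only clique of $\Delta$ is the empty one, and it is maximal. The "consequently" part then follows because $\Gamma$ is well-covered exactly when $\alpha(\Gamma)=i(\Gamma)$, which by the equalities holds exactly when $\omega(\Delta)=\iota(\Delta)$, i.e. when all maximal cliques of $\Delta$ have the same size.

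The main obstacle is only the bookkeeping that maximality transfers in both directions, and the observation $N(e)=F$ handles it.
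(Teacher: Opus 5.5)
Your proof is correct and follows the paper's argument: you translate an arbitrary clique of $\Cay(G,F)$ so that it contains $e$, then use $N(e)=F$ to identify (maximal) cliques through $e$ with (maximal) cliques of $\Delta(F)$, with sizes shifted by one. Your use of right translations $x\mapsto xg$ is the correct choice for the adjacency rule $gh^{-1}\in F$; the paper's proof instead invokes the left translation $g\mapsto x^{-1}g$, which sends $gh^{-1}$ to the conjugate $x^{-1}gh^{-1}x$ and is not an automorphism in general, so your version also fixes that slip.
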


\begin{proof}
A subset $Q\subseteq F$ is a clique in $\Delta(F)$ if and only if $\{e\}\cup Q$ is a clique in $\Cay(G,F)$.
Hence maximal cliques in $\Delta(F)$ correspond exactly to maximal cliques in $\Cay(G,F)$ that contain $e$, and their sizes differ by $1$.

Now let $K$ be any clique in $\Cay(G,F)$. Choosing $x\in K$ and applying the left translation $g\mapsto x^{-1}g$,
we obtain a clique of the same size containing $e$. Since left translations are automorphisms of $\Cay(G,F)$,
they preserve maximality as well. Therefore, the size of a largest clique in $\Cay(G,F)$ is one more than the size
of a largest clique in $\Delta(F)$, and the size of a smallest maximal clique 
in $\Cay(G,F)$ is one more than the size of a smallest maximal clique in $\Delta(F)$.

Since $\Cay(G,F)=\overline{\Gamma}$, cliques in $\Cay(G,F)$ are exactly independent sets in $\Gamma$,
and maximal cliques in $\Cay(G,F)$ are exactly maximal independent sets in $\Gamma$. 
Thus the stated equalities follow, and the final claim is immediate.
\end{proof}

\section{A lemma on finite symmetric subsets}

We will use a fact stated in \cite{Hooshmand2019}
for the additive group of rational numbers and proved for an arbitrary infinite group by Ravsky in 
\cite{Ravsky2025}.

\begin{lemma}\label{lemma:small_subset}
Let $F$ be a finite subset of an infinite group $G$ such that $F=F^{-1}$ and $e\notin F$.
Then there exists a subset $A\subset G$ such that $e\in A$ and
\[
A^{-1}A=G\setminus F.
\]
\end{lemma}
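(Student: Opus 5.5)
We build $A$ by transfinite (or, for countable $G$, ordinary) recursion. The requirements are that $A^{-1}A \cap F = \emptyset$ and that every $g \in G \setminus F$ lies in $A^{-1}A$.

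\textbf{Setup.} Let $\kappa = |G|$, well-order $(G\setminus F)\setminus\{e\}$ as $(g_\xi)_{\xi<\kappa}$, and start with $A_0 = \{e\}$. At each stage the current set $A_\xi$ will have cardinality $< \kappa$ (at most $|\xi|+1$ times a finite factor, or finite in the countable case) and will satisfy
\[
A_\xi^{-1}A_\xi \cap F = \emptyset .
\]
At limits we take unions. Since the condition only involves pairs of elements, it persists under unions.

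\textbf{Successor step.} Given $g = g_\xi$, if $g \in A_\xi^{-1}A_\xi$ already, do nothing. Otherwise we add a pair $\{x, xg\}$ with $x$ chosen suitably, so that $x^{-1}(xg) = g$. The new differences are:
\begin{itemize}
\item $g$ and $g^{-1}$, both outside $F$ because $F$ is symmetric and $g \notin F$;
\item $a^{-1}x$ and $a^{-1}xg$ for $a \in A_\xi$, together with their inverses.
\end{itemize}
We must choose $x$ so that neither $a^{-1}x$ nor $a^{-1}xg$ lies in $F$ for any $a \in A_\xi$. Since $F = F^{-1}$, the inverses are then automatically outside $F$ as well. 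This forbids
\[
x \in A_\xi F \cup A_\xi F g^{-1}.
\]
This forbidden set has cardinality at most $2|A_\xi||F| < \kappa$. Hence a valid $x$ exists, provided $\kappa$ is infinite and $|A_\xi| < \kappa$. The last condition holds for all $\xi < \kappa$ by the cardinality bookkeeping: at most two points are added per step, so $|A_\xi| \le 2|\xi| + 1 < \kappa$ for infinite $\xi$, and $|A_\xi|$ is finite for finite $\xi$.

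\textbf{Conclusion.} Put $A = \bigcup_\xi A_\xi$. Then $e \in A$, $A^{-1}A \cap F = \emptyset$, and every $g \in G \setminus F$ lies in $A^{-1}A$: for $g = e$ this holds trivially, and for other $g$ it holds by construction. Hence
\[
A^{-1}A = G \setminus F.
\]

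\textbf{Main obstacle.} The only delicate point is the counting at the successor step. It needs two facts: that $F$ is finite, so that the forbidden set is a union of fewer than $\kappa$ translates of a finite set, and that $G$ is infinite, so that this small set cannot exhaust $G$. Both hold by hypothesis, so the argument goes through.
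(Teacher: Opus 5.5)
Your proposal is correct and follows essentially the same route as the paper. Both run a transfinite recursion over an enumeration of $G\setminus F$ and add at each step a pair $\{x, xg_\xi\}$, with $x$ chosen outside the forbidden set $A_\xi F\cup A_\xi F g_\xi^{-1}$; your invariant $A_\xi^{-1}A_\xi\cap F=\varnothing$ is equivalent to the paper's $A_\xi\cap A_\xi F=\varnothing$. The only differences are cosmetic: you start from $A_0=\{e\}$ instead of fixing $a_0=e$, and you spell out the cardinality bookkeeping $|A_\xi|<\kappa$ explicitly.
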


\begin{proof}
If $F=\varnothing$, take $A=G$. Assume $F\neq\varnothing$. 
Let $\kappa=|G|$ and fix an injective enumeration $G\setminus F=\{g_\xi:\xi<\kappa\}$.
We construct by transfinite induction elements $a_\xi\in G$ such that for each $\xi<\kappa$ the set
\[
A_\xi=\{a_\eta,\ a_\eta g_\eta\mid\eta<\xi\}
\]
satisfies
\begin{equation}\label{eq:A_cap_FA}
  A_\xi\cap A_\xi F=\varnothing.
\end{equation}
Set $A_0=\varnothing$ and $a_0=e$, so $A_1=\{e,g_0\}$. 
One easily checks that \eqref{eq:A_cap_FA} holds for $\xi=1$.
Indeed, $A_1F=F\cup g_0F$. Since $e\notin F$ and $g_0\notin F$ (as $g_0\in G\setminus F$), 
and since $F=F^{-1}$ implies $e\notin g_0F$ (equivalently $g_0^{-1}\notin F$), 
we get $A_1\cap A_1F=\varnothing$.

Assume \eqref{eq:A_cap_FA} holds for some $\xi<\kappa$ and $\xi\geq1$. 
Since $|A_\xi|<|G|$ and $F$ is finite, 
we have
\[
A_\xi F\cup A_\xi Fg_\xi^{-1}\neq G,
\]
so we can choose
\[
a_\xi\in G\setminus(A_\xi F\cup A_\xi Fg_\xi^{-1}).
\]
With this choice of $a_\xi$, and using $e\notin F$, $g_\xi\notin F$, and $F=F^{-1}$,
neither $a_\xi$ nor $a_\xi g_\xi$ belongs to 
\[
A_\xi F\ \cup\ a_\xi F\ \cup\ a_\xi g_\xi F.
\]
Hence \eqref{eq:A_cap_FA} holds for $A_{\xi+1}=A_\xi\cup\{a_\xi,\, a_\xi g_\xi\}$ and
\[
g_\xi=a_\xi^{-1}(a_\xi g_\xi)\in A_{\xi+1}^{-1}A_{\xi+1}.
\]
At limit ordinals, we take unions. Finally set
\[
A =\bigcup_{\xi<\kappa}\{a_\xi,\ a_\xi g_\xi\}.
\]
Then $G\setminus F\subset A^{-1}A$. Moreover, $A\cap AF=\varnothing$ implies $F\cap A^{-1}A=\varnothing$, 
hence $A^{-1}A\subset G\setminus F$. Therefore $A^{-1}A=G\setminus F$.
\end{proof}

\begin{lemma}\label{lemma:finite_F_reduction_to_Delta}
Let $G$ be an infinite group, and let $F\subset G$ be a finite symmetric subset with $e\notin F$.
Then there exists $A\subset G$ such that, for $\Gamma=\Cay(G,\partial A)$, one has:
$\Gamma$ is well-covered if and only if all maximal cliques of $\Delta(F)$ have the same cardinality.
\end{lemma}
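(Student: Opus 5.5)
This is a direct combination of Lemma~\ref{lemma:small_subset} and Lemma~\ref{lemma:DeltaF_cliques}. Since $G$ is infinite and $F$ is finite, symmetric and avoids $e$, Lemma~\ref{lemma:small_subset} supplies a subset $A\subset G$ with $e\in A$ and
\[
A^{-1}A=G\setminus F .
\]
This is the set $A$ we take.

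Next we check that the hypotheses of Lemma~\ref{lemma:DeltaF_cliques} hold for this $A$. Because $A^{-1}A=G\setminus F$ and $F\subset G$, we have
\[
G\setminus A^{-1}A=F ,
\]
so the set called $F$ in Lemma~\ref{lemma:DeltaF_cliques} is exactly our given finite set $F$. Since $e\in A$, the graph $\Gamma=\Cay(G,\partial A)$ is the graph to which the earlier discussion and Lemma~\ref{lemma:DeltaF_cliques} apply. That lemma only requires $F=G\setminus A^{-1}A$ to be finite, which holds here.

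Lemma~\ref{lemma:DeltaF_cliques} then gives $\omega(\Delta(F))=\alpha(\Gamma)-1$ and $\iota(\Delta(F))=i(\Gamma)-1$. Its final assertion is that $\Gamma$ is well-covered if and only if all maximal cliques of $\Delta(F)$ have the same size, which is the claim.

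There is no real obstacle, since the substantive work is in Lemma~\ref{lemma:small_subset}. The only point to check is that the two lemmas match: $A^{-1}A=G\setminus F$ must give back exactly $F$ as the complement. It does, because $e\in A$ and $e\notin F$ make both descriptions consistent.
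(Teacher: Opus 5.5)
Your proposal is correct and is exactly the paper's argument: the paper proves this lemma as an immediate consequence of Lemma~\ref{lemma:small_subset}, which gives $A$ with $e\in A$ and $A^{-1}A=G\setminus F$, combined with Lemma~\ref{lemma:DeltaF_cliques}. One minor point: $G\setminus A^{-1}A=F$ is just the set identity $G\setminus(G\setminus F)=F$ and does not need $e\in A$ or $e\notin F$. Those hypotheses matter only for defining $\partial A$ and $\Delta(F)$.
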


\begin{proof}
Immediate from Lemmas~\ref{lemma:small_subset} and \ref{lemma:DeltaF_cliques}.
\end{proof}

\section{Finite configurations forcing instability}

\begin{lemma}\label{lemma:on_subset_F}
Let $G$ be a group (finite or infinite). For each set $F$ appearing below, let $\Delta=\Delta(F)$.
\begin{enumerate}[font=\upshape,label=(\alph*)]

\item \label{lemma:on_subset_F_invol}
If $H\leq G$ is a finite subgroup, $|H|>2$, $f\in G\setminus H$, $f^2=e$, and
\[
F=\{f\}\cup H^*,
\]
then $f$ is an isolated vertex of $\Delta$, hence $\iota(\Delta)=1$, and
\[
\omega(\Delta)\ge |H|-1.
\]

\item \label{lemma:on_subset_F_noncub}
If $H\leq G$ is a finite subgroup, $|H|>2$, $f\in G\setminus H$, $f^2\notin H$, $f^3\neq e$, and
\[
F=\{f,f^{-1}\}\cup H^*,
\]
then $f$ is an isolated vertex of $\Delta$, hence $\iota(\Delta)=1$, and
\[
\omega(\Delta)\ge |H|-1.
\]

\item \label{lemma:on_subset_F_cub}
If $H\leq G$ is a finite subgroup with $|H|>3$, $f\in G\setminus H$, $f^3=e$, and
\[
F=\{f,f^{-1}\}\cup H^*,
\]
then
\[
\iota(\Delta)=2
\quad\text{and}\quad
\omega(\Delta)\ge |H|-1.
\]

\item \label{lemma:on_subset_F_infinite_cyclic}
If $S=\langle s\rangle\le G$ is an infinite cyclic subgroup, $n>2$,
\[
H=\{s^{\pm i}\mid i=1,\ldots,n\}, \quad m\ge 2n+1,
\]
and
\[
F=\{s^m,s^{-m}\}\cup H,
\]
then $s^m$ is an isolated vertex of $\Delta$, hence $\iota(\Delta)=1$, and
\[
\omega(\Delta)\ge n-1.
\]
\end{enumerate}
\end{lemma}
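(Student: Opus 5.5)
The plan is to compute the adjacency relation of $\Delta(F)$ directly in each of the four cases, using only that $u\sim v$ in $\Delta$ if and only if $uv^{-1}\in F$. In each case I will first check that $F$ is symmetric and $e\notin F$, which is immediate from the hypotheses. I will then show that the special vertex $f$ (or $s^m$) has no neighbours, or only the single neighbour $f^{-1}$ in case \ref{lemma:on_subset_F_cub}. Finally I will exhibit a large clique inside the ``subgroup part''. The lower bounds on $\omega$ all come from the same observation: for distinct $h,k\in H^*$ we have $hk^{-1}\in H^*$, so $H^*$ is a clique of size $|H|-1$.

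\emph{Cases \ref{lemma:on_subset_F_invol} and \ref{lemma:on_subset_F_noncub}.} For $h\in H^*$, the element $fh^{-1}$ lies outside $H$ because $f\notin H$. So $fh^{-1}\in F$ could only happen if $fh^{-1}=f$, which forces $h=e$, or if $fh^{-1}=f^{-1}$, which forces $h=f^2\in H$. Both are excluded. In case \ref{lemma:on_subset_F_noncub} two further checks are needed.
\begin{itemize}
\item $f\neq f^{-1}$, since $f^2\notin H\ni e$.
\item $f\not\sim f^{-1}$, because $f\cdot f=f^2$ is not in $H$, is not $f$, and is not $f^{-1}$ (as $f^3\neq e$).
\end{itemize}
The same check shows that $f^{-1}\not\sim h$ for $h\in H^*$. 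Hence $f$ is isolated, so $\{f\}$ is a maximal clique and $\iota(\Delta)=1$.

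\emph{Case \ref{lemma:on_subset_F_cub}.} Now $f^2=f^{-1}$, so $f\sim f^{-1}$. The computation above shows that neither $f$ nor $f^{-1}$ is adjacent to any $h\in H^*$: for instance, $fh^{-1}=f^{-1}$ would give $h=f^2=f^{-1}\notin H$. Thus $\Delta$ is the disjoint union of the edge $\{f,f^{-1}\}$ and the complete graph on $H^*$. Its maximal cliques are exactly $\{f,f^{-1}\}$ and $H^*$, of sizes $2$ and $|H|-1\ge 3$. This gives $\iota(\Delta)=2$ and $\omega(\Delta)\ge |H|-1$.

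\emph{Case \ref{lemma:on_subset_F_infinite_cyclic}.} Write all elements as powers of $s$; since $S$ is infinite cyclic, this is just arithmetic on exponents in $\mathbb{Z}$.
\begin{itemize}
\item $s^m\not\sim s^{-m}$: the quotient is $s^{2m}$, and $2m$ is neither $\pm m$ nor of absolute value at most $n$.
\item $s^m\not\sim s^{j}$ for $1\le |j|\le n$: the quotient is $s^{m-j}$ with $m-j\ge m-n\ge n+1$. So it is not in $H$. It is not $s^m$ since $j\neq 0$, and it is not $s^{-m}$ since that would require $j=2m>n$.
\end{itemize}
Hence $s^m$ is isolated and $\iota(\Delta)=1$. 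The set $\{s,s^2,\dots,s^n\}$ is a clique, since the exponent differences are nonzero with absolute value at most $n-1$. This gives $\omega(\Delta)\ge n\ge n-1$.

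There is no real obstacle here. The only delicate point is not overlooking one of the possible coincidences $uv^{-1}=f^{\pm1}$ (respectively $s^{\pm m}$), and making sure that in case \ref{lemma:on_subset_F_cub} the vertices $f,f^{-1}$ really form their own component. Only that guarantees the minimum maximal clique has size exactly $2$ rather than $1$.
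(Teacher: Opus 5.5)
Your proof is correct and follows essentially the same direct verification as the paper: $H^*$ (resp.\ a run of consecutive powers of $s$) supplies the large clique, and checking $uv^{-1}\notin F$ shows that $f$ (resp.\ $s^m$) is isolated. The differences are only cosmetic: in (c) you identify $\Delta$ as the disjoint union of the edge $\{f,f^{-1}\}$ and the complete graph on $H^*$, whereas the paper proves separately that $\{f,f^{-1}\}$ is a maximal clique and that no vertex is isolated; in (d) your clique $\{s,\dots,s^n\}$ gives the slightly sharper bound $\omega(\Delta)\ge n$.
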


\begin{proof}
We first prove the lower bounds for $\omega(\Delta)$.

\smallskip
\noindent\textbf{Cases (a)--(c).}
For distinct $h_1,h_2\in H^*$ we have $h_1h_2^{-1}\in H^*\subseteq F$, so $H^*$ is a clique in $\Delta$.
Therefore
\[
\omega(\Delta)\ge |H^*|=|H|-1.
\]

\smallskip
\noindent\textbf{Case (d).}
Let $X=\{s,s^2,\dots,s^{n-1}\}\subseteq H$. For distinct $s^i,s^j\in X$,
\[
s^i(s^j)^{-1}=s^{i-j},\quad 1\le |i-j|\le n-2,
\]
hence $s^{i-j}\in H\subseteq F$. Thus $X$ is a clique in $\Delta$, so
\[
\omega(\Delta)\ge |X|=n-1.
\]

It remains to compute $\iota(\Delta)$, i.e., the minimum size of a maximal clique in $\Delta$.

\medskip
\noindent\textbf{(a)}\;
We show that $f$ is an isolated vertex of $\Delta$.
Let $x\in F\setminus\{f\}=H^*$. Then $xf\notin H$, hence
\[
xf\notin \{f\}\cup H^*=F,
\]
so $x\not\sim f$ in $\Delta$.
Thus $f$ is an isolated vertex, and $\iota(\Delta)=1$.

\medskip
\noindent\textbf{(b)}\;
Again, we show that $f$ is an isolated vertex of $\Delta$.
Let $x\in F\setminus\{f\}$. 
If $x=f^{-1}$ and $f\sim x$, then $f^2\in F$. This is impossible, since $f^2$ can be neither $f$ nor $f^{-1}$
(otherwise $f^3=e$), and also $f^2\notin H^*$.

If $x\in H^*$ and $f\sim x$, then $fx^{-1}\in F$. This is impossible, since $fx^{-1}$ can be neither $f$ nor $f^{-1}$
(otherwise $f^2\in H$), and also $fx^{-1}\notin H^*$.

Thus $f$ is adjacent to no element of $F\setminus\{f\}$, so $f$ is an isolated vertex. 
Hence $\iota(\Delta)=1$.

\medskip
\noindent\textbf{(c)}\;
Set $T:=\{f,f^{-1}\}$. This is a clique in $\Delta$, since
\[
f(f^{-1})^{-1}=f^2=f^{-1}\in F.
\]
We show that $T$ is maximal. Let $x\in F\setminus T=H^*$.

Neither $xf$ nor $xf^{-1}$ lies in $H$ (otherwise $f\in H$).
Also $xf,xf^{-1}\notin\{f,f^{-1}\}$: indeed, $xf=f$ or $xf^{-1}=f^{-1}$ gives $x=e$,
while $xf=f^{-1}$ or $xf^{-1}=f$ gives $f^2\in H$, hence $f\in H$, a contradiction.
Thus $xf,xf^{-1}\notin F$, so $x$ is not adjacent to either $f$ or $f^{-1}$.
Therefore $T$ is a maximal clique, and
\[
\iota(\Delta)\le 2.
\]

No vertex of $\Delta$ is isolated, since for every $x\in F$ the set $\{x\}$ is contained in a $2$-clique:
if $x\in\{f,f^{-1}\}$, then $\{x\}\subset T$; if $x\in H^*$, then (because $|H|\ge 3$) there exists
$y\in H^*$ with $y\ne x$, and then $xy^{-1}\in H^*\subseteq F$, so $\{x,y\}$ is a clique in $\Delta$.
Hence $\iota(\Delta)\ge 2$, and therefore
\[
\iota(\Delta)=2.
\]

\medskip
\noindent\textbf{(d)}\;
We show that $s^m$ is an isolated vertex of $\Delta$.
Let $x\in F\setminus\{s^m\}$. Write
\[
x=s^k,\quad k\in\{\pm1,\dots,\pm n,-m\}.
\]
We show that $xs^{-m}\notin F$.
If $k=-m$, then $xs^{-m}=s^{-2m}\notin F$.
If $1\le |k|\le n$, then $k-m<-n$ and $k-m\ne -m$, so $s^{k-m}\notin F$. 
Thus $x$ is not adjacent to $s^m$, and $s^m$ is an isolated vertex. 
Hence $\iota(\Delta)=1$.
\end{proof}

\noindent
\smallskip
\textbf{Remark.}
In each part of Lemma~\ref{lemma:on_subset_F}, the assumptions on $|H|$  
are chosen so that the stated bounds imply 
$\iota(\Delta)<\omega(\Delta)$
Indeed, in \ref{lemma:on_subset_F_invol}, \ref{lemma:on_subset_F_noncub}, and
\ref{lemma:on_subset_F_infinite_cyclic} we have $\iota(\Delta)=1$, while
$\omega(\Delta)\ge |H|-1\geq2$ (respectively, $\omega(\Delta)\ge n-1\geq2$).
In \ref{lemma:on_subset_F_cub} we have $\iota(\Delta)=2$ and $\omega(\Delta)\ge |H|-1>2$.

\section{Auxiliary lemmas on 2-groups}

\begin{lemma}\label{lemma:G/H_elementary}
Let $G$ be an infinite group and $H\leq G$ a finite subgroup. Assume that $g^2\in H$ for every $g\in G$.
Then $H\trianglelefteq G$ and $G/H$ is abelian of exponent $2$.
\end{lemma}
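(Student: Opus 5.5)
The plan is to get normality of $H$ directly from the hypothesis applied to products $gh$, and then use the standard fact that a group of exponent dividing $2$ is abelian. Finiteness of $H$ and infiniteness of $G$ should only matter at the very end, to ensure that $G/H$ is nontrivial, so that its exponent is exactly $2$ rather than $1$.

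\textbf{Step 1: $H$ is normal.} Fix $g\in G$ and $h\in H$.
\begin{enumerate}
\item Apply the hypothesis to the element $gh$. This gives $(gh)^2=ghgh\in H$.
\item Multiply on the right by $h^{-1}\in H$. This gives $ghg\in H$.
\item The hypothesis applied to $g$ gives $g^2\in H$, hence $g^{-2}\in H$.
\item Therefore
\[
ghg^{-1}=(ghg)\,g^{-2}\in H .
\]
\end{enumerate}
Since $g$ and $h$ were arbitrary, $gHg^{-1}\subseteq H$ for every $g\in G$. Applying this also to $g^{-1}$ gives equality, so $H\trianglelefteq G$.

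\textbf{Step 2: $G/H$ is abelian.} By Step~1 the quotient $G/H$ is a group. For each coset, $(gH)^2=g^2H=H$, so every element of $G/H$ squares to the identity. For any $x,y\in G/H$ we then have $xy=(xy)^{-1}=y^{-1}x^{-1}=yx$, since $x^{-1}=x$ and $y^{-1}=y$. Hence $G/H$ is abelian.

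\textbf{Step 3: the exponent is exactly $2$.} Since $H$ is finite and $G$ is infinite, $H\neq G$, so $G/H$ is nontrivial (indeed infinite). Combined with Step~2, $G/H$ is abelian of exponent $2$.

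I do not expect a real obstacle here. The only point requiring any care is the normality computation in Step~1: one must use the hypothesis twice, once for $gh$ and once for $g$ itself. Everything after that is routine.
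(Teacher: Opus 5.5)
Your proof is correct and follows the same route as the paper: you get normality by applying the square hypothesis to both $gh$ and $g$, note that $G/H$ then has exponent dividing $2$ and is therefore abelian, and use infiniteness of $G$ to make the quotient nontrivial. Your Step~1 is actually cleaner than the paper's displayed computation, which writes $(gh)^2(g^2)^{-1}=ghghg^{-2}=ghg^{-1}$ and so drops the factor $h^{-1}$ that you correctly insert via $ghg^{-1}=(gh)^2h^{-1}g^{-2}$.
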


\begin{proof}
Fix $g\in G$ and $h\in H$. By assumption, $(gh)^2\in H$, and also $g^2\in H$, hence
\[
(gh)^2(g^2)^{-1}=g h g h g^{-2}=g h g^{-1}\in H.
\]
Thus $gHg^{-1}\subset H$ for all $g$, so $H\trianglelefteq G$.
Therefore $G/H$ is abelian of exponent $2$.
\end{proof}

\begin{lemma}
\label{lemma:32}
There is no group $G$ of order $32$ such that
\begin{enumerate}
\item $G$ has exactly one involution, and
\item $S:=\{g^2\mid g\in G\}$ satisfies $|S|\leq 4$.
\end{enumerate}
\end{lemma}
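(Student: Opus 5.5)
The plan is to invoke the classical classification of finite $p$-groups with a unique subgroup of order $p$. A group of order $32$ with exactly one involution is a $2$-group with a unique subgroup of order $2$. By the well-known theorem (see \cite{Robinson}, 5.3.6), such a group is either cyclic or generalized quaternion. So we only need to examine two groups, $C_{32}$ and $Q_{32}$, and show that in each the set $S$ of squares has more than $4$ elements.

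\emph{The cyclic case.} For $G=C_{32}=\langle x\rangle$, the squares are exactly the elements $x^{2k}$. Hence $S=\langle x^2\rangle$ has order $16>4$.

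\emph{The quaternion case.} For $G=Q_{32}=\langle x,y\mid x^{16}=e,\ y^2=x^8,\ yxy^{-1}=x^{-1}\rangle$, the squares of elements of $\langle x\rangle$ already give $\langle x^2\rangle$, which has $8$ elements. Every element outside $\langle x\rangle$ has the form $x^iy$, and a direct computation gives $(x^iy)^2=x^i\,(yx^iy^{-1})\,y^2=x^ix^{-i}x^8=x^8$. This square already lies in $\langle x^2\rangle$, so $S=\langle x^2\rangle$ and $|S|=8>4$.

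In both cases condition~2 fails, which proves the lemma. The only nontrivial ingredient is the classification theorem. If one prefers to avoid citing it, I would argue directly instead. Let $t$ be the unique involution. Then $\langle t\rangle$ is central, and by a counting argument every abelian subgroup of $G$ is cyclic. One then shows that $G$ has an element of order at least $8$, since otherwise squaring maps into a small set and forces too many involutions. A cyclic subgroup of order $\ge 8$ then already contributes at least $4$ squares from its own squares, and one more square appears from outside it or from a larger cyclic subgroup. This direct route is the main obstacle, so the intended proof relies on the classification.
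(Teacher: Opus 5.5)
Your proof is correct and is essentially the paper's argument: reduce via the classification of $2$-groups with a unique involution (Robinson 5.3.6) to $C_{32}$ and $Q_{32}$. You then check that the squares form $\langle x^2\rangle$, of order $16$ and $8$ respectively, using $(x^iy)^2=x^8$. The direct, classification-free route sketched at the end is too vague to stand as a proof, but the main argument does not need it.
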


\begin{proof}
By \cite[5.3.6]{Robinson}, a finite $2$-group with a unique involution
is either cyclic or generalized quaternion. Hence $G\cong C_{32}$ or $G\cong Q_{32}$.

If $G\cong C_{32}=\langle x\rangle$, then
$S=\{x^{2k}\mid 0\leq k<32\}=\langle x^2\rangle$, so $|S|=16>4$.

If $G\cong Q_{32}=\langle a,b\mid a^{16}=1,\ b^2=a^8,\ bab^{-1}=a^{-1}\rangle$, then every
element is $a^k$ or $a^k b$, and
$(a^k)^2=a^{2k}$ while $(a^k b)^2=a^k b a^k b=b^2=a^8$.
Thus $S=\langle a^2\rangle$, so $|S|=8>4$.

In both cases $|S|>4$, a contradiction.
\end{proof}

\begin{corollary}\label{corollary:uniq_invol}
Let $G$ be a $2$-group and let $H=\langle h\rangle$ have order $4$. Assume that
\[
\forall g\in G\setminus H\quad g^2\in\{h,h^2,h^3\}.
\]
Then $G$ is a finite group.
\end{corollary}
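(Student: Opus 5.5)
We argue by contradiction. Suppose $G$ is infinite; the goal is to produce a subgroup of order $32$ that violates Lemma~\ref{lemma:32}.

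\emph{Step 1: structure of $G$.} For $g\in H$ we have $g^2\in H$ trivially, and for $g\in G\setminus H$ the hypothesis gives $g^2\in H$. So every square lies in the finite subgroup $H$, and Lemma~\ref{lemma:G/H_elementary} applies: $H\trianglelefteq G$ and $G/H$ is elementary abelian of exponent $2$. Since $G$ is infinite, $G/H$ is infinite, and in particular contains a subgroup of order $8$. Let $K\le G$ be its full preimage. Then $|K|=32$, $H\le K$, and $K$ is a $2$-group.

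\emph{Step 2: $K$ has exactly one involution.} If $g\in K\setminus H$, then $g^2\in\{h,h^2,h^3\}$, so $g^2\ne e$; hence $g$ is not an involution. Inside $H=\langle h\rangle\cong C_4$ the only involution is $h^2$. Therefore $h^2$ is the unique involution of $K$.

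\emph{Step 3: $K$ has at most four squares.} Squares of elements of $H$ lie in $\{e,h^2\}$, and squares of elements of $K\setminus H$ lie in $\{h,h^2,h^3\}$. So $S=\{g^2\mid g\in K\}\subseteq H$ and $|S|\le 4$. This contradicts Lemma~\ref{lemma:32}, so $G$ must be finite.

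The only point needing care is Step 1, namely that an infinite elementary abelian $2$-group has a subgroup of order $8$. This holds because it is a vector space over $\mathbb F_2$ of infinite dimension, so we may take the span of three independent vectors. The rest is direct bookkeeping. The $2$-group hypothesis is not actually needed beyond guaranteeing that $K$ is a $2$-group, and that already follows from $|K|=32$.
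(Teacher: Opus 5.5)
Your proof is correct and follows the paper's argument: Lemma~\ref{lemma:G/H_elementary} makes $G/H$ elementary abelian, a subgroup $K\ge H$ of order $32$ is taken as the preimage of an order-$8$ subgroup of the quotient, and $K$ has a unique involution and at most four squares, contradicting Lemma~\ref{lemma:32}. The only difference is that you pick the order-$8$ subgroup directly in the infinite $\mathbb{F}_2$-vector space $G/H$, which skips the paper's detour through local finiteness of $G$ and is slightly cleaner.
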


\begin{proof}
Assume, to the contrary, that $G$ is infinite. 
Since $g^2\in H$ for every $g\in G$, we have 
by Lemma~\ref{lemma:G/H_elementary}, $G/H$ is abelian of exponent $2$. Hence every finitely generated subgroup
of $G$ containing $H$ is finite, so $G$ is locally finite.

Therefore $G$ contains a finite subgroup $K\le G$ with $H\leq K$ and $|K|\ge 32$. Replacing $K$ by the full
preimage of a subgroup of order $8$ in $K/H$, we may assume that $|K|=32$ (and still $H\le K$).

Then $K$ satisfies the same square condition:
\[
\forall x\in K\setminus H\quad x^2\in\{h,h^2,h^3\}.
\]
Hence $\{x^2\mid x\in K\}\subseteq H$, so $K$ has at most four squares. Also $K$ has exactly one involution,
namely $h^2$, since for $x\in K\setminus H$ we have $x^2\ne 1$.
This contradicts Lemma~\ref{lemma:32}. Therefore $G$ is finite.
\end{proof}

\section{Proof of the theorem}

\begin{theorem*}
Every infinite group is unstable.
\end{theorem*}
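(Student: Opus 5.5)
The plan is to exhibit, for every infinite group $G$, a finite symmetric set $F\subset G$ with $e\notin F$ such that $\Delta(F)$ has maximal cliques of different sizes, i.e. $\iota(\Delta)<\omega(\Delta)$. Lemma~\ref{lemma:finite_F_reduction_to_Delta} then gives a subset $A$ with $\Cay(G,\partial A)$ not well-covered, so $G$ is unstable. The finite configurations are supplied by Lemma~\ref{lemma:on_subset_F}. By the Remark after that lemma, it suffices to find a finite subgroup $H$ and an element $f$ satisfying the hypotheses of one of (a)--(c), or an element of infinite order for (d). The proof is therefore a case analysis on the structure of $G$.

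\emph{Step 1 (non-periodic case).} If $G$ contains an element $s$ of infinite order, take $n=3$, $m=7$ and apply \ref{lemma:on_subset_F_infinite_cyclic}. From now on $G$ is an infinite periodic group.

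\emph{Step 2 (generic periodic case).} Fix a finite subgroup $H$ with $|H|>2$, chosen as follows.
\begin{itemize}
\item If $G$ has an element $h$ of order $4$, take $H=\langle h\rangle$.
\item Otherwise, if $G$ has an element of prime order $p\ge5$, take $H$ cyclic of order $p$.
\item Otherwise, take a finite subgroup of order $>3$ built from the available small elements. For instance, a Klein four-group if $G$ has exponent $2$ (then $G$ is abelian). In the mixed $2$/$3$ case, use a finitely generated, hence finite, subgroup: in this situation the element orders lie in $\{1,2,3,6\}$, so I would invoke local finiteness of groups of exponent dividing $6$.
\end{itemize}
Since $G$ is infinite, there is $f\notin H$. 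If $f$ is an involution, \ref{lemma:on_subset_F_invol} applies. If $f^2\notin H$ and $f^3\neq e$, \ref{lemma:on_subset_F_noncub} applies. If $f^3=e$ and $|H|>3$, \ref{lemma:on_subset_F_cub} applies. So we are done unless every $f\in G\setminus H$ satisfies $f^2\in H\setminus\{e\}$ (the \emph{bad case}), apart from the order-$3$ situation with $|H|=3$, which the choice of $H$ above is meant to avoid.

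\emph{Step 3 (excluding the bad case).} In the bad case, Lemma~\ref{lemma:G/H_elementary} shows that $H\trianglelefteq G$, that $G/H$ is elementary abelian, and that $G$ is locally finite. Moreover, no involution lies outside $H$.
\begin{itemize}
\item If $H=\langle h\rangle$ has order $4$, then every $f\notin H$ has $f^2\in\{h,h^2,h^3\}$, so $G$ is a $2$-group. Corollary~\ref{corollary:uniq_invol} then forces $G$ to be finite, a contradiction.
\item If $|H|=p\ge5$ is prime, then $f^2\in H^*$ gives $|f|=2p$. Hence $f^p$ is an involution, and it lies outside $H$ because $|H|$ is odd. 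This contradicts the bad case, or more directly lets us apply \ref{lemma:on_subset_F_invol} with $f^p$ in place of $f$.
\item In the remaining small case the orders lie in $\{1,2,3,6\}$ and there is no element of order $4$. If $f^2\in H^*$ is an involution, then $f$ has order $4$, which is impossible. If $f^2$ has order $3$, then $f$ has order $3$ or $6$ and its power $f^3$ is an involution. If $f^3\notin H$, apply \ref{lemma:on_subset_F_invol} to $f^3$. If $f^3\in H$, enlarge $H$ by local finiteness and recheck.
\end{itemize}

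The main obstacle is this last bookkeeping in the orders-$\{2,3,6\}$ case. There one must ensure $|H|>3$ whenever some $f\notin H$ has order $3$, and also rule out the bad case. My first attempt would be to take $H$ a maximal-size finite subgroup among those generated by two suitable elements, and to exploit the fact that in the bad case $G/H$ is elementary abelian $2$-group, so that elements of order $3$ all lie in $H$.
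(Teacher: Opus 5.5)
Your Step 1, the first two choices of $H$ in Step 2, and the first two bullets of Step 3 are sound. The proof is nevertheless incomplete, and the failure is in the third bullet of both Step 2 and Step 3, which is exactly the point you leave open.

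The remaining case is misdescribed. Excluding elements of order $4$ and of prime order $\ge 5$ only forces element orders of the form $2^a3^b$ with $a\le 1$. Here $b$ is unbounded, so elements of order $9,18,27,\dots$ may occur (e.g.\ $G=C_{3^\infty}$, or an infinite direct sum of copies of $C_9$). So the exponent need not divide $6$. Your appeal to local finiteness of groups of exponent dividing $6$ (Hall's theorem, a deep result anyway) then does not apply. Your recipe also does not say which $H$ with $|H|>3$ to take in such groups.

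The instruction ``if $f^3\in H$, enlarge $H$ by local finiteness and recheck'' is not an argument either, since nothing guarantees termination. That sub-case is in fact vacuous: $f^2,f^3\in H$ gives $f=f^3(f^2)^{-1}\in H$. Likewise $f$ cannot have order $3$ there, because then $f=(f^2)^2\in H$.

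The missing idea, which is how the paper proceeds, is to split on whether $G$ has a nontrivial element of odd order, rather than on elements of order $4$. In that case take $H=\langle h\rangle$ with $h$ of odd order, so $|H|\ge 3$.
\begin{itemize}
\item If some $f\notin H$ has $f^2\in H$, then $f$ cannot have odd order, since otherwise $f\in\langle f^2\rangle\le H$. So $f$ has order $2k$ with $k$ odd, and $f^k$ is an involution. It lies outside $H$ because $|H|$ is odd, so Lemma~\ref{lemma:on_subset_F}(a) applies.
\item Otherwise every $f\notin H$ has $f^2\notin H$. Then (b) applies unless every element outside $H$ has order $3$.
\item In that last situation, (c) applies if $|H|>3$. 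If $|H|=3$, the group has exponent $3$, so it is locally finite by the classical Burnside (Levi--van der Waerden) result. You can then replace $H$ by a finite subgroup $K$ with $|K|>3$ and apply (c) to any $f\notin K$.
\end{itemize}
What remains are $2$-groups. There your exponent-$2$ choice, and your order-$4$ bullet combined with Corollary~\ref{corollary:uniq_invol}, coincide with the paper's Case~2B. Your prime-$p\ge 5$ bullet becomes a special case of the odd-order argument. This route needs neither Hall's theorem nor any enlarging procedure.
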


\begin{proof}
Let $G$ be an infinite group.
By Lemma~\ref{lemma:finite_F_reduction_to_Delta}, 
it is enough to construct a finite symmetric set $F\subset G$ with $e\notin F$
such that, for $\Delta=\Delta(F)$, one has 
\begin{equation}\label{iota<omega}
  \iota(\Delta)<\omega(\Delta).
\end{equation}
We distinguish cases according to the algebraic structure of $G$.
 
\medskip
\noindent\textbf{Case 1:} $G$ contains an element of infinite order.
Then Lemma~\ref{lemma:on_subset_F}\ref{lemma:on_subset_F_infinite_cyclic} applies.

\medskip
\noindent\textbf{Case 2:} every element of $G$ has finite order.

\smallskip
\noindent\textbf{Case 2A}: $G$ has a nontrivial element of odd order.
Choose a nontrivial element $h\in G$ of odd order, and let $H=\langle h\rangle$.

If there exists $f\in G\setminus H$ such that $f^2\in H$, 
then the cyclic subgroup $\langle f\rangle$ has order $2m$, where $m>1$ is odd, and
\[
\langle f\rangle=\langle f'\rangle\times H',
\]
with $f'^2=e$ and $|H'|=m$. 
Thus, taking $f'$ in place of $f$ and $H'$ in place of $H$, 
we are in the setting of 
Lemma~\ref{lemma:on_subset_F}\ref{lemma:on_subset_F_invol}.

Otherwise, for every $x\in G\setminus H$ we have $x^2\notin H$.

If there exists $f\in G\setminus H$ with $f^3\neq e$, then
Lemma~\ref{lemma:on_subset_F}\ref{lemma:on_subset_F_noncub} applies.

If no such element exists, then every element of $G\setminus H$ has order $3$.
In particular, for every $f\in G\setminus H$ we have $f^3=e$ and
$f^2=f^{-1}\notin H$.
If, moreover, $|H|>3$, then
Lemma~\ref{lemma:on_subset_F}\ref{lemma:on_subset_F_cub} applies.

If $|H|=3$ and every element of $G$ has order $3$, then $G$ is a locally finite group
(see \cite[14.2.3]{Robinson}). Hence $G$ contains a finite subgroup $K$ with $|K|>3$.
Since $G$ is infinite, choose $f\in G\setminus K$. Then $f^3=e$, and
Lemma~\ref{lemma:on_subset_F}\ref{lemma:on_subset_F_cub} applies (with $H=K$).

\smallskip
\noindent\textbf{Case 2B:} every nontrivial element of $G$ has even order.
Then $G$ is a $2$-group.

If every nontrivial element of $G$ has order $2$, 
then Lemma~\ref{lemma:on_subset_F}\ref{lemma:on_subset_F_invol} applies.

Thus $G$ contains an element of order greater than $2$, and hence (taking a suitable power) an element of order $4$.
Let $H<G$ be a cyclic subgroup of order $4$.

If there exists $f\in G\setminus H$ such that $f^2\notin H$, 
then Lemma~\ref{lemma:on_subset_F}\ref{lemma:on_subset_F_noncub} applies.

If there exists $f\in G\setminus H$ such that $f^2=e$, 
then Lemma~\ref{lemma:on_subset_F}\ref{lemma:on_subset_F_invol} applies.

If for every $f\in G\setminus H$ we have $f^2\neq e$ and $f^2\in H$, then it follows from Corollary~\ref{corollary:uniq_invol} that $G$ is finite, a contradiction.
Thus, in all cases, there exists a finite symmetric set $F\subset G$ with $e\notin F$ 
such that \eqref{iota<omega} holds.
Therefore $G$ is unstable.
\end{proof}

\end{document}